\documentclass[12pt]{amsart}
\usepackage{graphicx}
\hoffset=-2cm
\setlength{\textwidth}{16.3cm}
\setlength{\textheight}{22.5cm}

\newtheorem{theorem}{Theorem}[section]

\newtheorem{lemma}[theorem]{Lemma}

\newtheorem{corollary}[theorem]{Corollary}

\theoremstyle{remark}
\newtheorem{remark}[theorem]{Remark}

\newtheorem{conjecture}[theorem]{Conjecture}

\numberwithin{equation}{section}

\begin{document}

\title[
Unknotting number and number of Reidemeister moves needed for unlinking
]{
Unknotting number and number of Reidemeister moves needed for unlinking
}

\author{Chuichiro Hayashi and Miwa Hayashi}


\thanks{The first author is partially supported
by Grant-in-Aid for Scientific Research (No. 22540101),
Ministry of Education, Science, Sports and Technology, Japan.}

\begin{abstract}
Using unknotting number, 
we introduce a link diagram invariant of Hass and Nowik type,
which changes at most by $2$ under a Reidemeister move.
 As an application, 
we show that a certain infinite sequence of diagrams 
of the trivial two-component link 
need quadratic number of Reidemeister moves
for being unknotted
with respect to the number of crossings.
 Assuming a certain conjecture 
on unknotting numbers of a certain series of composites of torus knots,
we show that the above diagrams
need quadratic number of Reidemeister moves
for being splitted.
\\
{\it Mathematics Subject Classification 2010:}$\ $ 57M25.\\
{\it Keywords:}$\ $
link diagram, Reidemeister move, Hass-Nowik knot diagram invariant, 
unknotting number.
\end{abstract}

\maketitle

\section{Introduction}

 In this paper,
we regard that knot is a link with one component,
and assume that links and link diagrams are oriented,
and link diagrams are in the $2$-sphere.
 A Reidemeister move is a local move of a link diagram
as in Figure \ref{fig:Reid123}.
 An RI (resp. II) move
creates or deletes a monogon face (resp. a bigon face).
 An RII move is called matched or unmatched 
according to the orientations of the edges of the bigon
as shown in Figure \ref{fig:matched}.
 An RIII move is performed on a $3$-gon face,
deleting it and creating a new one.
 Any such move does not change the link type.
 As Alexander and Briggs \cite{AB} and Reidemeister \cite{R} showed,
for any pair of diagrams $D_1$, $D_2$ which represent the same link type,
there is a finite sequence of Reidemeister moves
which deforms $D_1$ to $D_2$.

\begin{figure}[htbp]
\begin{center}
\includegraphics[width=8cm]{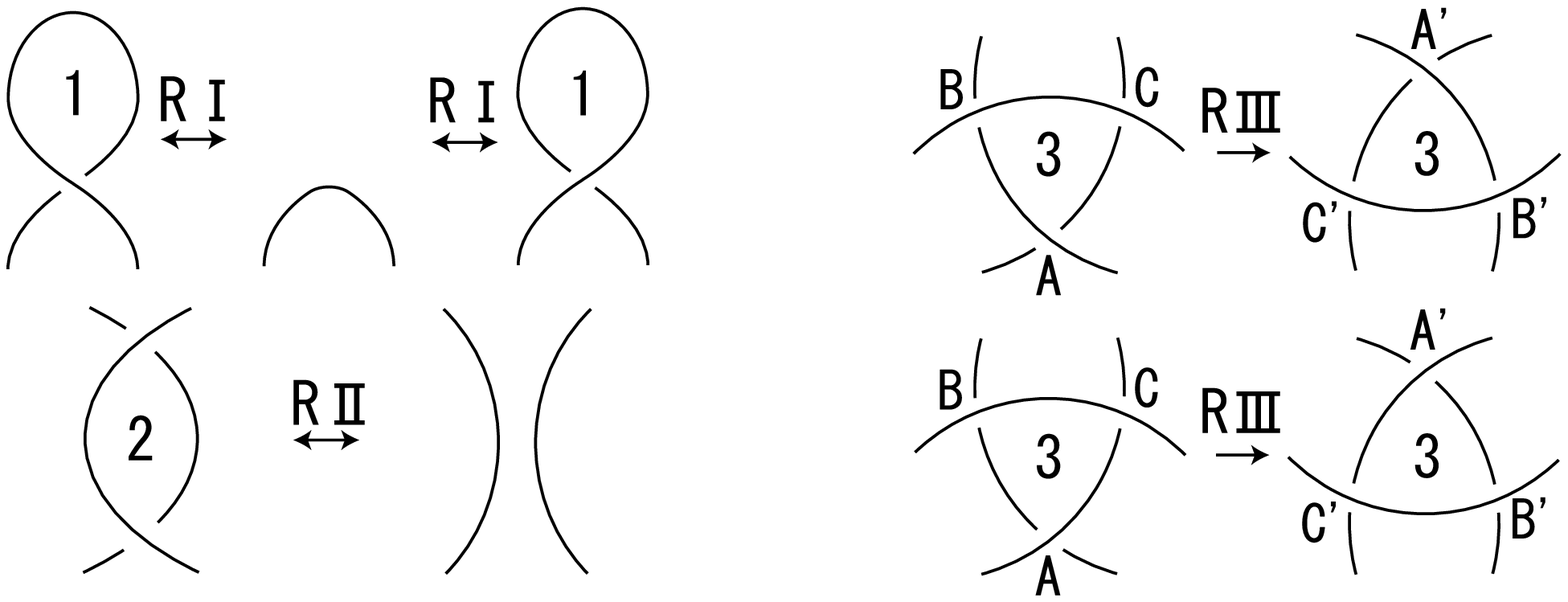}
\end{center}
\caption{}
\label{fig:Reid123}
\end{figure} 

\begin{figure}[htbp]
\begin{center}
\includegraphics[width=5cm]{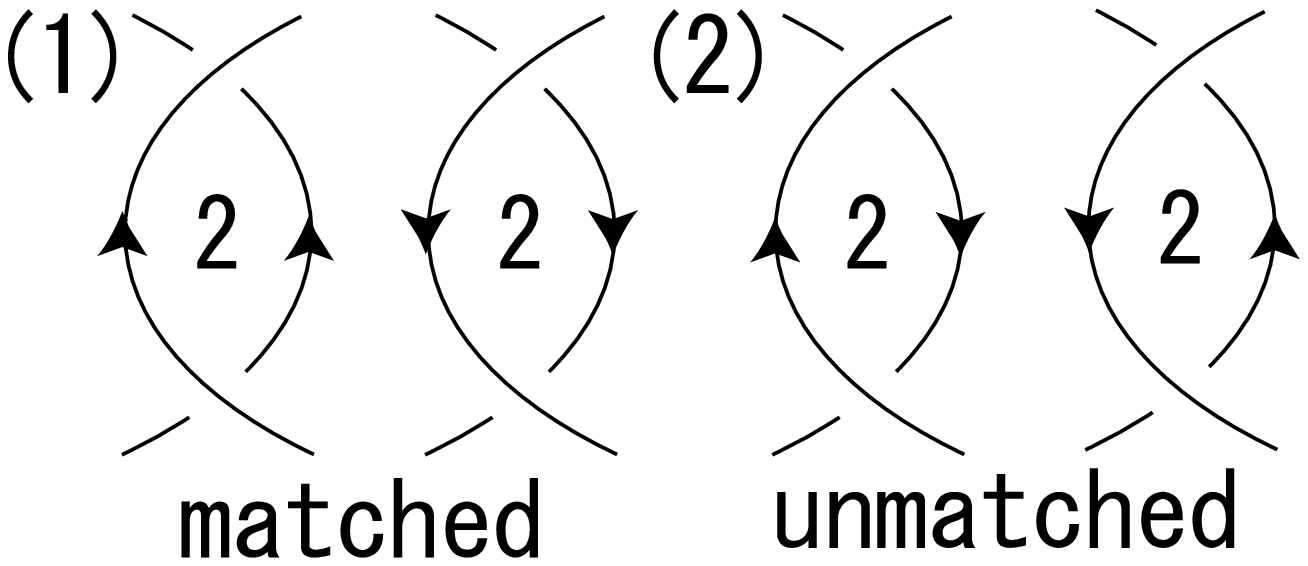}
\end{center}
\caption{}
\label{fig:matched}
\end{figure}

 Necessity of Reidemeister moves of type II and III is studied 
in \cite{O}, \cite{Ma} and \cite{Hg}.
 There are several studies 
of lower bounds for the number of Reidemeister moves
connecting two knot diagrams of the same knot.
 See \cite{H}, \cite{CESS}, \cite{HN1}, \cite{HN2}, \cite{HH}, \cite{HHSY}.
 In particular,
Hass and Nowik introduced in \cite{HN1}
a certain knot diagram invariant $I_{lk}$
by using the smoothing operation and the linking number.
 Let ${\mathbb G}_{\mathbb Z}$ be 
the free abelian group with basis $\{ X_n , Y_n \}_{n \in {\mathbb Z}}$.
 The invariant $I_{lk}$ 
assigns an element of ${\mathbb G}_{\mathbb Z}$ to a knot diagram.
 In \cite{HN2},
they showed that a certain homomorphism 
$g : {\mathbb G}_{\mathbb Z} \rightarrow {\mathbb Z}$ 
gives a numerical invariant $g(I_{lk})$ of a knot diagram
which changes at most by one under a Reidemeister move.
 They gave an example of an infinite sequence
of diagrams of the trivial knot
such that 
the $n$-th one
has $7n-1$ crossings,
can be unknotted by $2n^2+3n$ Reidemeister moves,
and needs at least $2n^2+3n-2$ Reidemeister moves 
for being unknotted.

 The above papers studied Reidemeister moves on knot diagrams
rather than link diagrams.
 In this paper,
we introduce a link diagram invariant $iu(D)$ of Hass and Nowik type
using unknotting number.
 The invariant $iu(D)$ changes at most by $2$ under a Reidemeister move.
 As an application, 
we show that a certain infinite sequence of diagrams 
of the trivial two-component link 
need quadratic number of Reidemeister moves
for being deformed into a diagram with no crossing.
 Assuming a certain conjecture 
on unknotting numbers of a certain series of composites of torus knots,
we show that the above diagrams
need quadratic number of Reidemeister moves
for being splitted.

\begin{figure}[htbp]
\begin{center}
\includegraphics[width=4cm]{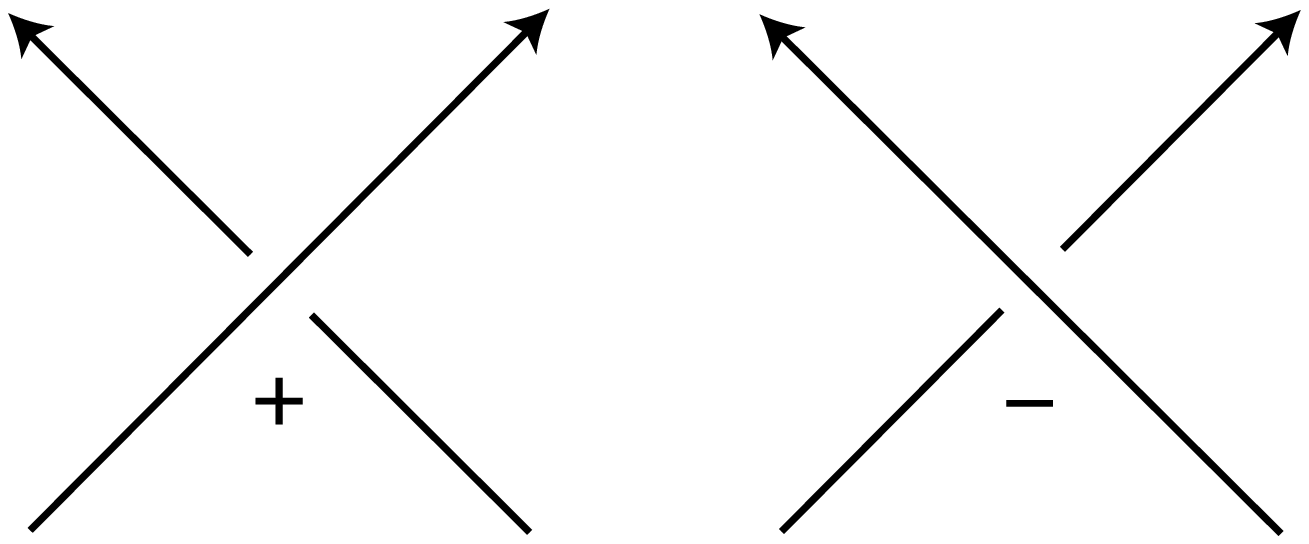}
\end{center}
\caption{}
\label{fig:sign}
\end{figure}

 We roughly sketch the definition of $iu(D)$.
 (Precise descriptions of the definitions of the unknotting number 
and $iu(D)$ are given in Section \ref{section:iu}.)
 For a link $L$ of $m$ components,
the {\it unknotting number} $u(L)$ of $L$ is 
the $X$-Gordian distance 
between $L$ and the trivial link of $m$ components.
 We define the link diagram invariant $iu(D)$ as below.
 Let $D$ be a diagram of an oriented link (possibly a knot) $L_D$.
 We assume that $D$ is in the $2$-sphere.
 For a crossing $p$ of $D$,
let $D_p$ denote the link (possibly a knot) obtained from $D$ 
by performing a smoothing operation at $p$
with respect to the orientation of $D$.
 Note that $D_p$ is a link rather than a diagram.
 If $L_D$ has $m_D$ components,
then $D_p$ has $m_D +1$ components when $p$ is a crossing between subarcs of the same component,
and $m_D -1$ components when $p$ is a crossing between subarcs of distinct component.
 Then we set 
$iu(D) = \sum_{p \in {\mathcal C}(D)} {\rm sign}(p) |\Delta u(D_p)|$,
where ${\mathcal C}(D)$ is the set of all the crossings of $D$,
and $\Delta u(D_p)$ is the difference between the unknotting numbers of $D_p$ and $L_D$, i.e., 
$\Delta u (D_p)= u(D_p) - u(L_D)$.
 The sign of a crossing ${\rm sign}(p)$ is defined as in Figure \ref{fig:sign}
as usual.
 We set $iu(D)=0$ for a diagram $D$ with no crossing.

 When $D$ represents a knot,
$iu(D)+w(D)$ with $w(D)$ being the writhe
is the Hass-Nowik knot diagram invariant $g(I_{\phi} (D))$ 
introduced in \cite{HN1} and \cite{HN2}
with $g$ being the homomorphism with $g(X_k)= |k|+1$ and $g(Y_k) = -|k|-1$ as in \cite{HN2},
and $\phi$ being the difference of the unknotting numbers $\Delta u$.

\begin{theorem}\label{theorem:main}
 The link diagram invariant $iu(D)$ does not change
under an RI move and an unmatched RII move,
and changes at most by one under a matched RII move,
and at most by two under an RIII move.
\end{theorem}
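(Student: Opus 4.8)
The plan is to handle the four kinds of moves one at a time, and in each case to separate the crossings of $D$ into those lying inside the small disk $B$ in which the move is performed and those lying outside it. If $q$ is a crossing outside $B$, then smoothing $D$ at $q$ does not disturb the picture inside $B$, so the move can still be carried out on the smoothed diagram; hence the link $D_q$ is unchanged up to isotopy, and its term $\mathrm{sign}(q)\,|\Delta u(D_q)|$ in $iu(D)$ is unaffected. Thus in every case it suffices to control the terms coming from the crossings inside $B$. Throughout I will use two facts: that the ambient link $L_D$, and hence $u(L_D)$, is unchanged by any Reidemeister move; and that a single crossing change alters the unknotting number by at most $1$ (the triangle inequality for the $X$-Gordian distance), so that $\big|\,|\Delta u(D_p)|-|\Delta u(D_p')|\,\big|\le k$ whenever two smoothed links differ by $k$ crossing changes and share the ambient link $L_D$.

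For an RI move the disk $B$ contains one new self-crossing $p$, whose oriented smoothing splits off a small trivial circle $O$, so that $D_p=L_D\sqcup O$. Since $u(L_D\sqcup O)=u(L_D)$, we get $\Delta u(D_p)=0$; as every other crossing is unaffected, $iu(D)$ is unchanged. For an unmatched RII move, $B$ contains two new crossings $p_1,p_2$ with $\mathrm{sign}(p_1)=-\mathrm{sign}(p_2)$. Reading off the local picture, the oriented smoothing at either crossing caps the two anti-parallel bigon edges off in the same way, so $D_{p_1}$ and $D_{p_2}$ are isotopic and $|\Delta u(D_{p_1})|=|\Delta u(D_{p_2})|$; the two terms then cancel and $iu(D)$ is again unchanged.

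For a matched RII move the two new crossings $p_1,p_2$ still have opposite signs, but the local analysis is no longer symmetric. One oriented smoothing, say at $p_1$, reconnects the parallel strands so as to reverse the move, the surviving crossing becoming a removable kink; thus $D_{p_1}=L_D$ and $\Delta u(D_{p_1})=0$. The other smoothing reconnects them the opposite way, producing a link $D_{p_2}$ that differs from $L_D$ by a single crossing change, so that $|\Delta u(D_{p_2})|\le 1$. Hence the net change of $iu(D)$ is $\mathrm{sign}(p_2)\,|\Delta u(D_{p_2})|$, of absolute value at most $1$. For an RIII move, $B$ contains a triangle with three crossings, and the move carries them to three crossings, paired with their counterparts between the same two strands so as to give a sign-preserving bijection. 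When one smooths the crossing whose two strands are, at the remaining two crossings, both the over-strand (respectively both the under-strand), a genuine removable bigon appears there, so that smoothing yields isotopic links before and after the move. Exactly one of the three crossings, namely the one between the globally over-most and under-most strands, instead smooths to a clasp; the before- and after-smoothings at this crossing are related by a local modification supported on the two surviving clasp crossings, changing at most two crossings, so its term changes by at most $2$ while the other two terms do not change. Hence $iu(D)$ changes by at most $2$ under an RIII move.

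The step I expect to be the main obstacle is the RIII case, and within it the clasp crossing: one must verify, across the several orientation and over/under configurations of the move, that precisely two of the three triangle smoothings yield removable bigons (hence preserved links) while the third yields a clasp, and that the before/after clasp links are related by a modification of at most two crossings, so that the two unknotting numbers differ by at most $2$. The crossing-change bound on $u$, together with the invariance of $L_D$, then gives the estimate. The remaining points are routine: the lemma $u(L_D\sqcup O)=u(L_D)$ used for RI, and the orientation-by-orientation check identifying which oriented smoothing reverses a matched RII move.
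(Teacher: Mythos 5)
Your treatment of RI, unmatched RII, and RIII is essentially the paper's proof: you reduce to the crossings inside the disk of the move, observe for RI that the smoothing splits off a trivial circle, for unmatched RII that the two smoothings give the same link so their oppositely signed terms cancel, and for RIII that the smoothings at the top--middle and middle--bottom crossings give isotopic links before and after the move while the top--bottom smoothings differ by two crossing changes, whence the bound $2$. However, your matched RII case contains a genuine error. You claim that smoothing at one bigon crossing ``reverses the move,'' so that $D_{p_1}=L_D$ and $\Delta u(D_{p_1})=0$, and that the other smoothing yields a link differing from $L_D$ by a single crossing change. Both claims are impossible: an oriented smoothing always changes the number of components by one (as the paper itself notes, $D_p$ has $m_D+1$ or $m_D-1$ components), whereas $L_D$, and any link obtained from $L_D$ by crossing changes, has exactly $m_D$ components. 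So neither smoothed link can equal $L_D$, nor can it be one crossing change away from $L_D$. You have also transplanted the wrong local picture: the ``surviving crossing becomes a removable kink'' phenomenon belongs to the \emph{unmatched} case (it is precisely why both smoothings there give the same link); in the matched case the surviving crossing is a genuine crossing between the two reconnected strands, on one side or the other.

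The correct argument --- and it is what the paper does --- compares the two smoothed links with \emph{each other} rather than with $L_D$: the links $E_x$ and $E_y$ obtained by smoothing at the positive and negative corner of the matched bigon differ by a single crossing change (changing the surviving crossing carries one local tangle to the other, up to isotopy), so $|u(E_x)-u(E_y)|\le 1$; since the two crossings have opposite signs, the net change of the invariant is
$\bigl|\,|u(E_x)-u(L_E)|-|u(E_y)-u(L_E)|\,\bigr|\le |u(E_x)-u(E_y)|\le 1$.
Note that this is exactly the comparison principle you set up yourself in your opening paragraph (two smoothed links differing by $k$ crossing changes have $|\Delta u|$'s differing by at most $k$); you simply failed to apply it in the matched RII case, substituting a false identification with $L_D$. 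With that one paragraph repaired along these lines, your proof is correct and coincides with the paper's.
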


 The above theorem is proved in Section \ref{section:iu}.

\begin{corollary}\label{corollary:main}
 Let $D_1$ and $D_2$ be link diagrams of the same oriented link.
 We need at least $| iu(D_1) - iu(D_2) | /2$ 
matched RII and RIII moves
to deform $D_1$ to $D_2$ by a sequence of Reidemeister moves.
 In particular,
when $D_2$ is a link diagram with no crossing,
we need at leat $|iu (D_1)|/2$ 
matched RII and RIII moves.
\end{corollary}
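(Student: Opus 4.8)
The plan is to telescope along an arbitrary Reidemeister sequence and apply Theorem~\ref{theorem:main} one move at a time. First I would fix a sequence of Reidemeister moves $D_1 = E_0 \to E_1 \to \cdots \to E_N = D_2$ realizing the deformation, and let $k$ denote the number of those moves that are matched RII or RIII moves. For each index $i$, Theorem~\ref{theorem:main} controls the jump $|iu(E_i) - iu(E_{i-1})|$: it equals $0$ when the $i$-th move is an RI or an unmatched RII move, is at most $1$ when it is a matched RII move, and is at most $2$ when it is an RIII move. In particular each jump is bounded by $2$ on the $k$ counted moves and is $0$ on all the others.

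Next I would apply the triangle inequality to the telescoping sum
$$iu(D_1) - iu(D_2) = \sum_{i=1}^{N} \big( iu(E_{i-1}) - iu(E_i) \big),$$
which yields $|iu(D_1) - iu(D_2)| \le \sum_{i=1}^{N} |iu(E_{i-1}) - iu(E_i)| \le 2k$, since only the $k$ matched RII and RIII moves contribute to the right-hand side and each contributes at most $2$. Dividing by $2$ gives $k \ge |iu(D_1) - iu(D_2)|/2$, and since the deforming sequence was arbitrary this is exactly the first assertion. For the special case I would simply recall that $iu(D) = 0$ for any diagram with no crossing, directly from the definition of $iu$; hence when $D_2$ has no crossing we have $iu(D_2) = 0$ and the general bound reduces to $k \ge |iu(D_1)|/2$.

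I expect no serious obstacle here, as the entire analytic content sits in Theorem~\ref{theorem:main}; the corollary is a formal consequence of it. The only points requiring a little care are bookkeeping ones: verifying that the RI and unmatched RII moves genuinely drop out of the sum so that the count $k$ ranges over matched RII and RIII moves only, and bounding each counted jump uniformly by $2$ even though a matched RII move in fact changes $iu$ by at most $1$ — a sharper per-move estimate that the present statement deliberately does not exploit.
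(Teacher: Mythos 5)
Your proof is correct and is exactly the standard telescoping argument the paper relies on: the paper states this corollary without proof as an immediate consequence of Theorem~\ref{theorem:main}, and your per-move bound of $2$ on matched RII and RIII moves (with $0$ for RI and unmatched RII) together with the triangle inequality, plus the definitional fact that $iu(D_2)=0$ when $D_2$ has no crossing, fills in precisely the intended reasoning. No gaps.
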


 Note that, for estimation of the unknotting number,
we can use the signature and the nullity 
(see Theorem 10.1 in \cite{Mu} and Corollary 3.21 in \cite{KT})
or the sum of the absolute values of linking numbers
over all pairs of components.

 For a link diagram $D$, 
the sum of the signs of all the crossings
is called the writhe and denoted by $w(D)$.
 It does not change under an RII or RIII move
but increases (resp. decreases) by $1$ 
under an RI move creating a positive (resp. negative) crossing.
 Set
$iu_{\epsilon, \delta} (D) 
= iu(D) + \epsilon (\dfrac{1}{2}c(D) + \delta \dfrac{3}{2}w(D))$
for a link diagram $D$,
where $\epsilon = \pm 1$, $\delta = \pm 1$
and $c(D)$ denotes the number of crossings of $D$.
 Then we have the next corollary.

\begin{corollary}\label{corollary:main2}
 The link diagram invariant $iu_{\epsilon, +1} (D)$ (resp. $iu_{\epsilon, -1} (D)$)
increases by $2\epsilon$ under an RI move creating a positive (resp. negative) crossing,
decreases by $\epsilon$ under an RI move creating a negative (resp. positive) crossing,
increases by $\epsilon$ under an unmatched RII move, 
changes at most by $2$ under a matched RII move,
and changes at most by $2$ under an RIII move.

 Let $D_1$ and $D_2$ be link diagrams of the same oriented link.
 We need at least $| iu_{\epsilon, \delta}(D_1) - iu_{\epsilon, \delta}(D_2) | /2$ 
Reidemeister moves to deform $D_1$ to $D_2$.
 In particular,
when $D_2$ is a link diagram with no crossing,
we need at least $|iu_{\epsilon, \delta} (D_1)|/2$ Reidemeister moves.
\end{corollary}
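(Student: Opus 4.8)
The plan is to deduce everything from Theorem~\ref{theorem:main} by tracking how the two correction terms $\tfrac12 c(D)$ and $\delta\tfrac32 w(D)$ respond to each Reidemeister move, and then combining with the change of $iu(D)$. First I would record, move by move, the elementary behaviour of $c(D)$ and $w(D)$: the crossing number $c(D)$ changes by $\pm1$ under an RI move, by $\pm2$ under any RII move, and not at all under an RIII move; the writhe $w(D)$ increases (resp. decreases) by $1$ under an RI move creating a positive (resp. negative) crossing and is unchanged under RII and RIII moves, as already noted before the statement. Coupling these with Theorem~\ref{theorem:main} --- which says $iu$ is unchanged under RI and unmatched RII, changes by at most $1$ under a matched RII, and by at most $2$ under an RIII --- gives the change of each ingredient for every move type.

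Second, I would substitute these into $iu_{\epsilon,\delta}(D) = iu(D) + \epsilon\bigl(\tfrac12 c(D) + \delta\tfrac32 w(D)\bigr)$ and read off the first assertion. For instance, under an RI move creating a positive crossing, $iu$ is unchanged while $\tfrac12 c$ and $\tfrac32 w$ increase by $\tfrac12$ and $\tfrac32$ respectively, so the total change is $\epsilon(\tfrac12 + \delta\tfrac32)$, which equals $2\epsilon$ for $\delta=+1$ and $-\epsilon$ for $\delta=-1$; the RI move creating a negative crossing is symmetric. Under an unmatched RII move only $c$ moves (by $\pm2$), giving a change of $\pm\epsilon$, namely $+\epsilon$ in the crossing-creating direction. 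Under a matched RII move the change is the $iu$-change (of absolute value at most $1$) plus the $\pm\epsilon$ coming from $\tfrac12 c$, and under an RIII move it is just the $iu$-change; running through the finitely many cases yields the stated values and the bound ``at most $2$'' in the last two cases.

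The key uniform consequence --- the point I would isolate --- is that across every move type $iu_{\epsilon,\delta}$ changes by at most $2$ in absolute value, this bound being attained in the RI case for the appropriate $\delta$, in the matched RII case, and in the RIII case. Granting this, the lower bound follows by the same counting argument as Corollary~\ref{corollary:main}: if a sequence of $N$ Reidemeister moves carries $D_1$ to $D_2$, the triangle inequality gives $|iu_{\epsilon,\delta}(D_1) - iu_{\epsilon,\delta}(D_2)| \le 2N$, whence $N \ge |iu_{\epsilon,\delta}(D_1) - iu_{\epsilon,\delta}(D_2)|/2$. For the special case, a diagram $D_2$ with no crossing has $c(D_2)=w(D_2)=0$ and $iu(D_2)=0$ by definition, so $iu_{\epsilon,\delta}(D_2)=0$ and the bound becomes $N \ge |iu_{\epsilon,\delta}(D_1)|/2$.

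The computation itself is pure bookkeeping once Theorem~\ref{theorem:main} is in hand, so there is no deep obstacle; the only real care is in the matched RII case, where the $iu$-change (up to $1$) and the $\tfrac12 c$-change ($\pm\epsilon$) can reinforce each other to reach $2$, and in correctly matching the sign conventions for $\epsilon$ and $\delta$. Verifying that $2$ is never exceeded in these tight cases is exactly what shows the correction terms $\tfrac12 c + \delta\tfrac32 w$ are calibrated so that, unlike $iu$ alone, $iu_{\epsilon,\delta}$ also \emph{sees} the RI and unmatched RII moves and hence bounds the total number of moves rather than only the matched RII and RIII moves.
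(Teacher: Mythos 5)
Your proposal is correct and follows exactly the argument the paper intends: the corollary is stated without explicit proof precisely because it is the bookkeeping you carry out, combining Theorem~\ref{theorem:main} with the recorded behaviour of $c(D)$ and $w(D)$ under each move type, and then applying the triangle inequality as in Corollary~\ref{corollary:main}. Your case-by-case computation (including the sign check $\epsilon(\tfrac12+\delta\tfrac32)=2\epsilon$ or $-\epsilon$ for RI, the $\pm\epsilon$ for unmatched RII, and the verification that $2$ is never exceeded in the matched RII and RIII cases) matches the paper's conventions, so nothing further is needed.
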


\begin{remark}
 We can set 
$iu'(D) = 
\sum_{p \in {\mathcal S}(D)} {\rm sign}(p) |\Delta u(D_p)|
+
\sum_{p \in {\mathcal M}(D)} {\rm sign}(p) \cdot u(D_p)$,
where ${\mathcal S}(D)$ denotes the all crossings of $D$
between subarcs of the same component,
and ${\mathcal M}(D)$ denotes the all crossings of $D$
between subarcs of distinct components.
 Then $iu'(D)$ has the same properties as those of $iu(D)$ 
described in Theorem \ref{theorem:main}, Corollary \ref{corollary:main}
and Corollary \ref{corollary:main2}.
\end{remark}

 Suppose that an $m$-component link $L$ is split,
and there is a splitting $2$-sphere
which separates components $J_1, J_2, \cdots, J_k$ of $L$
form the other components $K_1, K_2, \cdots, K_{\ell}$ of $L$.
 Set $J = \{ J_1, \cdots, J_k \}$,
and $K = \{ K_1, \cdots, K_{\ell} \}$.
 Let $D$ be a diagram of $L$ in the $2$-sphere.
 We denote by ${\mathcal C}(J,K,D)$ 
the set of all crossings of $D$
between a subarc of $J_i$ and a subarc of $K_j$
for some $1 \le i \le k$ and $1 \le j \le \ell$.
 Then we set
$iu'(J,K,D) = \sum_{p \in {\mathcal C}(J,K,D)} {\rm sign}(p) \cdot u(D_p)$.
 If ${\mathcal C}(J,K,D) = \emptyset$,
then we set $iu'(J,K,D) = 0$.

 A similar argument as the proof of Theorem \ref{theorem:main} shows
the next theorem.
 We omit the proof.

\begin{theorem}\label{theorem:main_split}
 Let $L$, $J$, $K$, $D$ be as above.
 The link diagram invariant $iu'(J,K,D)$ does not change
under an RI move and an unmatched RII move,
and changes at most by one under a matched RII move,
and at most by two under an RIII move.
 We need at least $| iu'(J,K,D) | /2$ 
matched RII and RIII moves
to deform $D$ to a split link diagram $E$ 
with ${\mathcal C}(J,K,E) = \emptyset$
by a sequence of Reidemeister moves.
\end{theorem}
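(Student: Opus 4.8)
The plan is to follow the scheme of the proof of Theorem \ref{theorem:main}, analysing each Reidemeister move locally. Let $D'$ be obtained from $D$ by a single move performed in a disk $B$. First I would observe that the move induces a canonical bijection between the crossings of $D$ and those of $D'$ lying outside $B$, preserving both ${\rm sign}$ and the pair of components meeting at each crossing; hence for such a crossing $p$ its membership in ${\mathcal C}(J,K,\cdot)$ is unaffected. Moreover $D_p$ and $D'_p$ then differ only by the same Reidemeister move carried out in $B$, which now lies in the complement of the smoothing site, so $D_p$ and $D'_p$ represent the same link and $u(D_p)=u(D'_p)$. Consequently every crossing outside $B$ contributes equally to $iu'(J,K,D)$ and $iu'(J,K,D')$, and the change in $iu'(J,K,\cdot)$ is governed entirely by the crossings created, deleted, or modified inside $B$.

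For an RI move the unique crossing inside $B$ is a self-crossing of a single component, so it never lies in ${\mathcal C}(J,K,\cdot)$ and contributes nothing; thus $iu'(J,K,\cdot)$ is unchanged. For an RII move two crossings $q_1,q_2$ of opposite sign appear between the same pair of strands, so either both lie in ${\mathcal C}(J,K,\cdot)$ or neither does; in the latter case there is no change. When both lie in ${\mathcal C}(J,K,\cdot)$ I would compare the two smoothed links. For an unmatched (anti-parallel) bigon the oriented smoothings at $q_1$ and at $q_2$ yield the same link, so ${\rm sign}(q_1)u(D'_{q_1})+{\rm sign}(q_2)u(D'_{q_2})=0$ and $iu'(J,K,\cdot)$ does not change. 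For a matched (parallel) bigon the two smoothed links $D'_{q_1}$ and $D'_{q_2}$ are related by at most a single crossing change; since $u$ is the $X$-Gordian distance to the trivial link, the triangle inequality gives $|u(D'_{q_1})-u(D'_{q_2})|\le 1$, and because the two signs are opposite the net change is $|u(D'_{q_1})-u(D'_{q_2})|\le 1$.

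The RIII move is the main obstacle. Here the three crossings persist through the move with unchanged signs and unchanged component-pairs, so only the three smoothed links can change. Writing $s$ for the strand slid across the opposite crossing, so that $s$ is either over both or under both of the other two strands $u,v$, I would argue that smoothing at the crossing between $u$ and $v$ leaves the link type unchanged, because $s$ then passes entirely over (or entirely under) the two smoothed arcs and can be slid across by an isotopy realised by RII moves; hence this crossing contributes $0$. The remaining two crossings, namely those involving $s$, are the delicate ones: for each of these the smoothed diagrams before and after the move are related by at most a single crossing change, so each contributes a change of at most $1$. Summing, the total change under an RIII move is at most $2$. Verifying this last claim — that smoothing at an $s$-crossing transforms $D_p$ into $D'_p$ by at most one $X$-Gordian move, across all orientation sub-cases — is exactly the case analysis already carried out for Theorem \ref{theorem:main}, and is where the bulk of the work lies.

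Finally the lower bound follows formally. A diagram $E$ with ${\mathcal C}(J,K,E)=\emptyset$ has $iu'(J,K,E)=0$ by definition. Along any sequence of Reidemeister moves from $D$ to $E$, the quantity $iu'(J,K,\cdot)$ is unchanged by RI and unmatched RII moves and changes by at most $2$ under each matched RII or RIII move (indeed by at most $1$ under a matched RII); hence the number of matched RII and RIII moves used is at least $|iu'(J,K,D)-iu'(J,K,E)|/2=|iu'(J,K,D)|/2$, as claimed.
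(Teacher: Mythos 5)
Your overall scheme (locality of the move, invariance of membership in ${\mathcal C}(J,K,\cdot)$, then a case analysis parallel to Theorem \ref{theorem:main}) is exactly what the paper intends, and your RI, RII and final counting arguments are correct. But your RIII case contains a genuine error: you misattribute where the change occurs. First, your strand $s$ is not well defined --- in an RIII move \emph{both} the top strand (over the other two) and the bottom strand (under the other two) can be viewed as the strand slid across the opposite crossing, so ``the strand slid'' does not single out one crossing pair. More seriously, the correct distribution of changes, which is what the paper's proof of Theorem \ref{theorem:main} actually establishes, is: the smoothings at the \emph{two crossings involving the middle strand} are unchanged as links (the remaining strand passes entirely over, respectively entirely under, the merged arcs and slides across), while the smoothing at the \emph{crossing between the top and bottom strands} changes by up to \emph{two} crossing changes --- after smoothing there, the middle strand crosses the merged strand once over and once under, and passing the merged arc across it flips both crossings. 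Your claim that each of the two $s$-crossings changes by at most a single crossing change is false for the top--bottom crossing, and your appeal to ``the case analysis already carried out for Theorem \ref{theorem:main}'' does not help, because that analysis proves the different statement just described, not yours.

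This is not a harmless bookkeeping slip, even though your total bound of $2$ happens to coincide with the truth: of your two ``$\le 1$'' contributions, one crossing (top--middle or middle--bottom, whichever involves $s$ and the middle strand) in fact contributes $0$, and the other (top--bottom) can genuinely contribute $2$, so your stated per-crossing bound is unprovable as written. Indeed the paper's own example shows the change of $2$ at a single crossing is realized: for the diagrams $D_n$, the deformation of Section \ref{section:deformation} uses $(n-1)n/2$ RIII moves and $n$ matched RII moves while $iu_{+1,+1}(D_n)$ drops from roughly $n^2+2n$ to $0$, which forces essentially every RIII move to change the invariant by the full amount $2$ at the single top--bottom crossing (whose smoothed links are the $T(2,k)\ \sharp\ T(2,-k)$, with unknotting numbers stepping by $2$ detectable via linking numbers for even $k$). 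To repair your proof, replace the ``$\le 1$ each for the two $s$-crossings'' step by the paper's trichotomy: both middle-strand crossings contribute $0$, the top--bottom crossing satisfies $|u(E_z)-u(D_z)|\le 2$, and hence, when that crossing lies in ${\mathcal C}(J,K,\cdot)$, the change in $iu'(J,K,\cdot)$ is at most $2$.
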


\begin{figure}[htbp]
\begin{center}
\includegraphics[width=5cm]{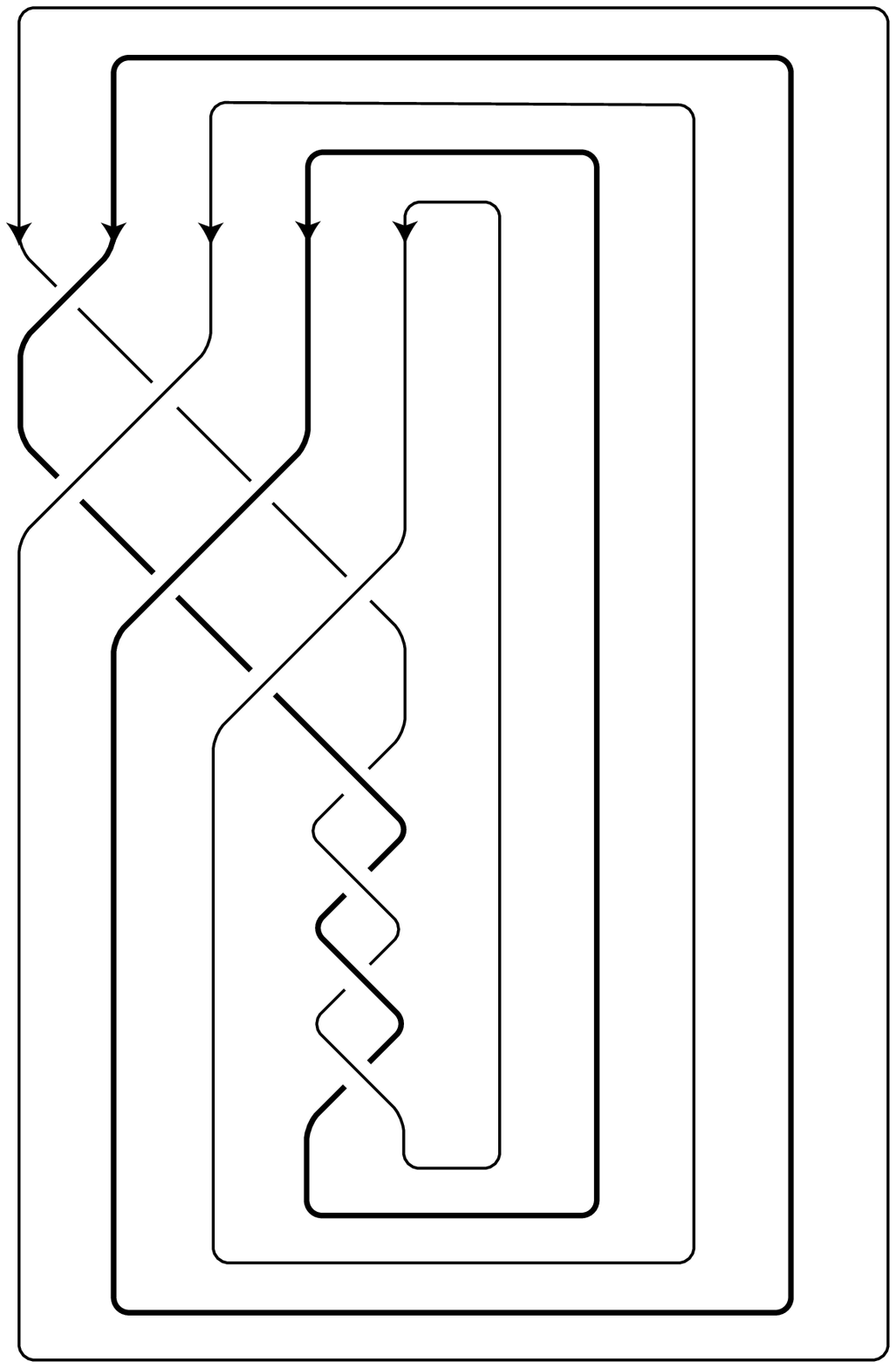}
\end{center}
\caption{}
\label{fig:Dn}
\end{figure}

\begin{figure}[htbp]
\begin{center}
\includegraphics[width=5cm]{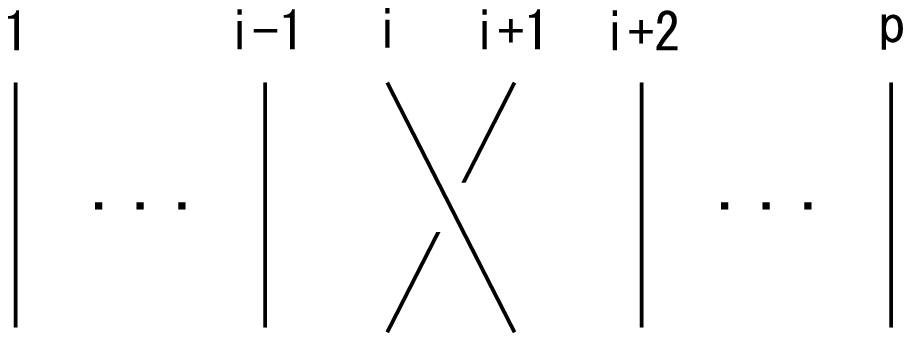}
\end{center}
\caption{}
\label{fig:sigma}
\end{figure}

 Let us describe the link diagram $D_n$ with $n$ being a natural number.
 See Figure \ref{fig:Dn},
where $D_n$ with $n=4$ is depicted.
 For any $i \in \{ 1, 2, \cdots, n-1 \}$,
let $\sigma_i$ be the generator of the $n$-braid group $B_n$,
which denotes the braid
where the $i$-th strand crosses over the $(i+1)$\,st strand
(Figure \ref{fig:sigma}).
 Then, $D_n$ is the closure of the $(n+1)$-braid
$\sigma_1^{-1} (\sigma_2^{-1} \sigma_1^{-1})(\sigma_3^{-1} \sigma_2^{-1})
\cdots(\sigma_n^{-1} \sigma_{n-1}^{-1}) \sigma_n^n$.
 We orient $D_n$ so that it is descending on the braid.
 Thus $D_n$ has $2n-1$ positive crossings and $n$ negative crossings.

\begin{theorem}\label{theorem:sequence}
 For any natural number $n$,
the diagram $D_n$ of the trivial two-component link
can be deformed to a diagram with no crososing
by a sequence of $(n^2+3n-2)/2$ Reidemeister moves
which consists of 
$n-1$ RI moves deleting a positive crossing,
$n$ matched RII moves deleting a bigon face,
and $(n-1)n/2$ RIII moves.
 Moreover, 
any sequence of Reidemeister moves 
bringing $D_n$ to a diagram with no crossing
must contain
$\dfrac{1}{2}[3n-2+2\displaystyle\sum_{k=1}^{n-1} u(T(2,k)\ \sharp\ T(2,-k)) 
+ u(T(2,n)\ \sharp\ T(2,-n)) ]$
or larger number of Reidemeister moves,
where $T(2,k)$ is the $(2,k)$-torus link, 
$T(2,-k)$ is the mirror image of $T(2,k)$,
and $\sharp$ denotes the connected sum. 
\end{theorem}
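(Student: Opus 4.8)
The plan is to prove the two assertions separately: the upper bound by exhibiting an explicit economical sequence of moves on the defining braid, and the lower bound by evaluating $iu(D_n)$ and substituting into Corollary~\ref{corollary:main2}.

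For the realizing sequence I would work entirely with the braid word $w_n=\sigma_1^{-1}(\sigma_2^{-1}\sigma_1^{-1})\cdots(\sigma_n^{-1}\sigma_{n-1}^{-1})\sigma_n^n$ and translate three kinds of simplification into moves: a braid relation $\sigma_i^{-1}\sigma_{i+1}^{-1}\sigma_i^{-1}=\sigma_{i+1}^{-1}\sigma_i^{-1}\sigma_{i+1}^{-1}$ is one RIII move, a cancellation $\sigma_i^{-1}\sigma_i=1$ is one matched RII move deleting a bigon, and a Markov destabilization is one RI move deleting a positive crossing; cyclic permutation of the word costs nothing because diagrams are read in $S^2$. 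I would push the positive twist $\sigma_n^n$ leftwards into the staircase, using one braid relation to feed each meeting of a twist generator with the staircase into a subsequent RII cancellation. Tracking the triangular shape of the staircase, I expect this to use exactly $\binom{n}{2}$ RIII moves, after which the word becomes a descending braid cleared by $n$ matched RII cancellations and $n-1$ RI destabilizations; the total $\binom{n}{2}+n+(n-1)=(n^2+3n-2)/2$ matches the claim. I would make the bookkeeping rigorous by induction on $n$, checking $n=1,2$ directly.

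For the lower bound, note first that since $L_{D_n}$ is trivial we have $u(L_{D_n})=0$, so $\Delta u(D_p)=u(D_p)$ and $iu(D_n)=\sum_p\mathrm{sign}(p)\,u(D_p)$. The substance is the identification of each smoothing $D_p$ as a link type. I would compute the underlying permutation of $w_n$ to pin down the two components, classify every crossing as a self-crossing or a mixed crossing accordingly, and then determine $D_p$ by reducing the smoothed braid word with braid relations and cyclic permutations. I expect all but a distinguished family of crossings to smooth to a trivial knot or trivial link and hence contribute $0$ (in particular the $n$ negative crossings of the twist); the distinguished crossings should smooth so that the twist region closes to a torus link $T(2,k)$ while the remaining staircase block closes to its mirror $T(2,-k)$ along a shared strand, giving $D_p=T(2,k)\,\sharp\,T(2,-k)$. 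Counting, with the multiplicity $2$ for $1\le k\le n-1$ and $1$ for $k=n$, should yield $iu(D_n)=2\sum_{k=1}^{n-1}u(T(2,k)\sharp T(2,-k))+u(T(2,n)\sharp T(2,-n))\ge 0$.

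Finally I would invoke Corollary~\ref{corollary:main2} with $\epsilon=\delta=+1$. Since $c(D_n)=3n-1$ and $w(D_n)=n-1$, one has $\tfrac12 c(D_n)+\tfrac32 w(D_n)=3n-2$, whence $iu_{+1,+1}(D_n)=iu(D_n)+(3n-2)\ge 0$ and $|iu_{+1,+1}(D_n)|=iu(D_n)+(3n-2)$; the corollary then bounds any Reidemeister sequence from $D_n$ to a crossingless diagram below by $|iu_{+1,+1}(D_n)|/2$, which is exactly the stated number. The main obstacle is the uniform identification of the distinguished smoothings as $T(2,k)\,\sharp\,T(2,-k)$ with the correct multiplicities and with the component counts matching (self-crossings producing the three-component links for even $k$, mixed crossings the knots for odd $k$); everything else, including the move count for the upper bound and the passage through Corollary~\ref{corollary:main2}, is routine once this identification is secured.
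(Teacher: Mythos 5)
Your proposal is correct and follows the paper's proof in all essentials: the upper bound is obtained, as in the paper's Lemma~\ref{lemma:deformation1}, by an inductive braid-word reduction in which braid relations are RIII moves, cancellations $\sigma_i^{-1}\sigma_i$ are matched RII moves and Markov destabilizations are RI moves (your variant pushes $\sigma_n^n$ leftwards where the paper instead accumulates the staircase into powers $\sigma_1^{-k}$ and destabilizes, with identical move counts $n-1$, $n$, $\binom{n}{2}$), while the lower bound is exactly the paper's: the smoothings at the $2n-1$ positive staircase crossings are identified as $T(2,k)\ \sharp\ T(2,-k)$ with multiplicity $2$ for $1\le k\le n-1$ and $1$ for $k=n$, the $n$ negative twist crossings smooth to the trivial knot, $\Delta u(D_p)=u(D_p)$ since $D_n$ is trivial, and $iu_{+1,+1}(D_n)=iu(D_n)+3n-2$ is fed into Corollary~\ref{corollary:main2} with $\epsilon=\delta=+1$. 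The identification of the smoothed links, which you rightly flag as the main remaining verification, is precisely what the paper's Lemma~\ref{lemma:iuD2n} records by direct inspection of the diagram.
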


 We estimate the sum 
$\Sigma=\displaystyle\sum_{k=1}^{n-1} u(T(2,k)\ \sharp\ T(2,-k))
+ u(T(2,n)\ \sharp\ T(2,-n))$.
 For an even number $k$,
the link $T(2,k)\ \sharp\ T(2,-k)$ has $3$ components.
 By using the sum of the absolute values of the linking numbers,
we can see easily that $u(T(2,k) \sharp\ T(2,-k)) = k$.
 For an odd number $k$ larger than $1$,
$T(2,k)\ \sharp\ T(2,-k)$ is a composite knot.
 A composite knot has $2$ or greater unknotting number,
which was shown in \cite{S} by M. Scharlemann.
 Hence we have $\Sigma \ge (n^2+4n-8)/2$ when $n$ is even,
and $\Sigma \ge (n^2+4n-9)/2$ when $n$ is odd.
 If the conjecture on the unknotting number below holds,
then $\Sigma = n^2 - n$.

\begin{conjecture}\label{conjecture:unknotting}
 $u(T(2,k)\ \sharp\ T(2,-k)) = k-1$ for any odd integer $k$.
\end{conjecture}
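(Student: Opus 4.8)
The plan is to prove the two inequalities separately; the upper bound is routine, and all the difficulty lies in the lower bound $u(T(2,k)\,\sharp\,T(2,-k)) \ge k-1$. For the upper bound, recall that for odd $k$ the torus knot $T(2,k)$ is the closure of $\sigma_1^{\,k}$ in $B_2$, and a single crossing change turns $\sigma_1^{\,k}$ into $\sigma_1^{\,k-2}$, i.e.\ replaces $T(2,k)$ by $T(2,k-2)$. Hence $(k-1)/2$ crossing changes unknot $T(2,k)$, and likewise for the mirror; performing these on the two summands separately gives $u(T(2,k)\,\sharp\,T(2,-k)) \le (k-1)/2 + (k-1)/2 = k-1$. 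That this is optimal for each summand \emph{alone} follows from the signature bound, since $|\sigma(T(2,k))| = k-1$. Thus the conjecture is precisely the assertion that the unknotting number is additive for this family, the reverse inequality being the substantive point.

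The main obstacle to the lower bound is that $T(2,k)\,\sharp\,T(2,-k)$ has the form $K \,\sharp\, \overline{K}$ with $K=T(2,k)$ invertible, hence is a ribbon (in particular slice) knot. Consequently every concordance-type lower bound for the unknotting number---the signature and Tristram--Levine signatures (all zero here), the smooth four-genus, $\tau$, $s$---vanishes, while the homological bounds (the Nakanishi index, or the minimal number of generators of $H_1$ of the double branched cover, here $\mathbb{Z}/k \oplus \mathbb{Z}/k$) yield only $u \ge 2$, matching the bound of Scharlemann already quoted. In particular the case $k=3$ (the square knot) is settled outright, since Scharlemann's bound and the upper bound both give $2$; but for $k \ge 5$ any argument must use an obstruction sensitive to the unknotting number itself rather than merely to the concordance class.

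Accordingly, I would attack the lower bound through the double branched cover. One has $\Sigma_2\big(T(2,k)\,\sharp\,T(2,-k)\big) = L(k,1)\,\sharp\,{-L(k,1)}$, and by the Montesinos trick an unknotting sequence of length $u$ exhibits this three-manifold as the boundary of a smooth four-manifold $W$ obtained from $B^4$ by attaching $u$ two-handles, whose intersection form is a $u \times u$ integral matrix presenting $\mathbb{Z}/k \oplus \mathbb{Z}/k$. One would then feed the Heegaard Floer correction terms ($d$-invariants) of $L(k,1)\,\sharp\,{-L(k,1)}$---computable from the Ozsv\'ath--Szab\'o lens-space recursion---into the inequalities constraining such a bounding $W$, in the manner of Owens's unknotting-number estimates, and try to show no matrix of size smaller than $k-1$ is compatible.

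The hard part is forcing the bound all the way up to $k-1$: since the homological and signature data already allow $u$ as small as $2$, the \emph{entire} gap must be extracted from the $d$-invariant inequalities across all $\mathrm{spin}^{c}$ structures of the connected sum of lens spaces, and controlling that computation uniformly in the odd parameter $k$ is the crux. Because the statement is a special case of the general additivity conjecture $u(K_1\,\sharp\,K_2) = u(K_1) + u(K_2)$, which remains open, a complete argument would either resolve additivity in this family or exploit features peculiar to these torus-knot summands; I expect the latter, via the very symmetric $d$-invariant profile of $L(k,1)\,\sharp\,{-L(k,1)}$, to be the more promising route.
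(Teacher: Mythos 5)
You have not proved the statement, and indeed it cannot be checked against a proof in the paper, because the paper does not prove it either: it is stated as Conjecture~\ref{conjecture:unknotting} and used only \emph{conditionally} (in the estimate $\Sigma = n^2-n$ and in the remarks after Theorem~\ref{theorem:sequence_split}); the authors' unconditional bounds rely only on Scharlemann's $u \ge 2$ for composite knots, exactly the bound you quote. Within your own write-up, the upper bound $u(T(2,k)\ \sharp\ T(2,-k)) \le k-1$ is correct (crossing changes $\sigma_1^k \to \sigma_1^{k-2}$ on each summand plus subadditivity), and the case $k=3$ is correctly settled by Scharlemann's theorem. But for $k \ge 5$ the lower bound --- the entire content of the conjecture --- is left as a program: the decisive step, that no integral matrix of size less than $k-1$ satisfying the Montesinos-trick framing conditions and presenting $\mathbb{Z}/k \oplus \mathbb{Z}/k$ is compatible with the correction terms of $L(k,1)\ \sharp\ {-L(k,1)}$, is asserted as something to ``try to show,'' not shown.

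There is moreover a concrete reason to expect that step to fail as formulated. By the very ribbonness you identify, $\Sigma_2\bigl(T(2,k)\ \sharp\ T(2,-k)\bigr) = L(k,1)\ \sharp\ {-L(k,1)}$ bounds a smooth rational homology ball, so its $d$-invariants vanish on a metabolizing subgroup of order $k$ of the $\mathrm{spin}^c$ torsor; this is precisely the degenerate situation in which Owens-type obstructions lose their grip, and in all known applications such $d$-invariant arguments yield improvements of bounded size over the homological bound $u \ge 2$ (Wendt/Nakanishi), never a lower bound growing linearly in a parameter for a slice knot. So the gap is not a technical omission but the open problem itself: as you note, the claim is an instance of additivity of unknotting number under connected sum (given $u(T(2,k)) = (k-1)/2$ from the signature), which is unresolved, and no currently known invariant separates $u\bigl(T(2,k)\ \sharp\ T(2,-k)\bigr)$ from $2$ for $k \ge 5$. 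Your proposal is an honest and well-informed survey of why the statement is hard, but it establishes only $2 \le u \le k-1$, which is what the paper already uses.
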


 Applying Theorem \ref{theorem:main_split} to the diagram $D_n$,
we have the next theorem.
 We omit the proof.

\begin{theorem}\label{theorem:sequence_split}
 Any sequence of Reidemeister moves 
bringing $D_n$ to a disconnected diagram
must contain
$\displaystyle\sum_{k=1}^{\frac{n}{2}-1} u(T(2,2k+1)\ \sharp\ T(2,-(2k+1)))$
or larger number of Reidemeister moves
when $n$ is even, 
and 
$\dfrac{1}{2}[
2\displaystyle\sum_{k=1}^{\frac{n-1}{2}-1} u(T(2,2k+1)\ \sharp\ T(2,-(2k+1))) 
+ u(T(2,n)\ \sharp\ T(2,-n)) ]$
or larger number of Reidemeister moves
when $n$ is odd.
\end{theorem}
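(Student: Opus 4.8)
The plan is to apply Theorem \ref{theorem:main_split} to $D_n$ with the only available splitting: since $L_{D_n}$ is the trivial two-component link, its two components give $J=\{J_1\}$ and $K=\{K_1\}$, and a disconnected diagram $E$ of $L_{D_n}$ is exactly one with $\mathcal{C}(J,K,E)=\emptyset$. By Theorem \ref{theorem:main_split}, deforming $D_n$ to such an $E$ requires at least $|iu'(J,K,D_n)|/2$ matched RII and RIII moves, hence at least that many Reidemeister moves in total. So the whole problem reduces to evaluating $iu'(J,K,D_n)=\sum_{p\in\mathcal{C}(J,K,D_n)}{\rm sign}(p)\,u((D_n)_p)$, and the two asserted bounds are just $|iu'(J,K,D_n)|/2$ written out by parity of $n$.

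To compute this sum I would reuse the smoothing computation already performed for Theorem \ref{theorem:sequence}, which identifies, for each crossing $p$ of $D_n$, the link $(D_n)_p$ as a connected sum $T(2,k)\,\sharp\,T(2,-k)$ (together with the exceptional term $T(2,n)\,\sharp\,T(2,-n)$), and records ${\rm sign}(p)$. The genuinely new input is a parity observation. Tracking the braid permutation shows that $L_{D_n}$ has two components, carried by the odd- and even-indexed strands; a crossing $p$ lies in $\mathcal{C}(J,K,D_n)$ precisely when its oriented smoothing merges the two components, i.e.\ when $(D_n)_p$ is a knot. But $T(2,k)\,\sharp\,T(2,-k)$ is a knot exactly when $k$ is odd, and a three-component link when $k$ is even (and likewise $T(2,n)\,\sharp\,T(2,-n)$ is a knot iff $n$ is odd). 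Hence restricting the defining sum of $iu$ to $\mathcal{C}(J,K,D_n)$ keeps precisely the odd-index contributions together with the $T(2,n)$ term only when $n$ is odd, and discards the even-index, three-component contributions, which all come from self-crossings.

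Carrying this out, I would check that the $n$ crossings of the twist block $\sigma_n^n$, although inter-component, have oriented smoothings that reduce to $\sigma_n^{n-1}$ on those two strands and untwist to the unknot, so they contribute $0$; consequently all nonzero terms come from the positive crossings of the staircase block, with the self-crossings producing the even-index links and the inter-component crossings producing the odd-index knots, each with the multiplicities read off in Theorem \ref{theorem:sequence}. This yields $iu'(J,K,D_n)=2\sum_{k=1}^{n/2-1}u(T(2,2k+1)\,\sharp\,T(2,-(2k+1)))$ for even $n$, and $iu'(J,K,D_n)=2\sum_{k=1}^{(n-1)/2-1}u(T(2,2k+1)\,\sharp\,T(2,-(2k+1)))+u(T(2,n)\,\sharp\,T(2,-n))$ for odd $n$. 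Dividing by $2$ gives exactly the two stated bounds.

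The main obstacle is the smoothing identification itself: verifying that each $(D_n)_p$ is the asserted torus connected sum with the correct sign and component count, and in particular that the twist-block smoothings are trivial. This is precisely the geometric core of Theorem \ref{theorem:sequence}, which is why the present statement follows essentially for free once that computation is in hand; the only genuinely new step is the component-parity bookkeeping that separates the inter-component crossings (odd $k$, knots) from the self-crossings (even $k$, three-component links).
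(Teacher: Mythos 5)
Your proposal is correct and follows exactly the route the paper intends: the paper omits the proof of Theorem \ref{theorem:sequence_split}, stating only that it follows by applying Theorem \ref{theorem:main_split} to $D_n$, and your argument fills in precisely that application, reusing the smoothing identifications of Lemma \ref{lemma:iuD2n} together with the parity observation that $T(2,k)\ \sharp\ T(2,-k)$ is a knot exactly for odd $k$ (so only those crossings, plus the $\sigma_n$-block crossings with zero contribution, lie in ${\mathcal C}(J,K,D_n)$). Your index bookkeeping in both parity cases matches the stated bounds, so nothing is missing.
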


 Since the unknotting number of a composite knot
is greater than or equal to $2$,
the above number is larger than or equal to $n-2$. 
 If Conjecture \ref{conjecture:unknotting} is true,
then the above number is equal to $(n^2-2n)/4$ when $n$ is even,
and to $(n^2-2n+1)/4$ when $n$ is odd.

 The precise definition of link diagram invariant $iu(D)$ is given 
in Section \ref{section:iu},
where changes of the value of the invariant under Reidemeister moves
are studied.
 The sequence of Reidemeister moves in Theorem \ref{theorem:sequence}
is described in Section \ref{section:deformation}.
 In Section \ref{section:calculation}, 
we calculate $iu(D_n)$, to prove Theorem \ref{theorem:sequence}.


\section{link diagram invariant}\label{section:iu}

 A link is called the {\it trivial $n$-component link}
if it has $n$ components and bounds a disjoint union of $n$ disks.
 The trivial $n$-component link admits
a {\it trivial diagram} with no crossing.

 Let $L$ be a link with $n$ components,
and $D$ a diagram of $L$.
 We call a sequence of Reidemeister moves and crossing changes on $D$
an {\it $X$-unknotting sequence} in this paragraph,
if it deforms $D$ into a (possibly non-trivial) diagram 
of the trivial $n$-component link.
 The {\it length} of an $X$-unknotting sequence
is the number of crossing changes in it.
 The minimum length among all the $X$-uknotting sequences on $D$
is called the {\it uknotting number} of $L$.
 We denote it by $u(L)$.
 Clearly, $u(L)$ depends only on $L$ and not on $D$.

\begin{figure}[htbp]
\begin{center}
\includegraphics[width=4cm]{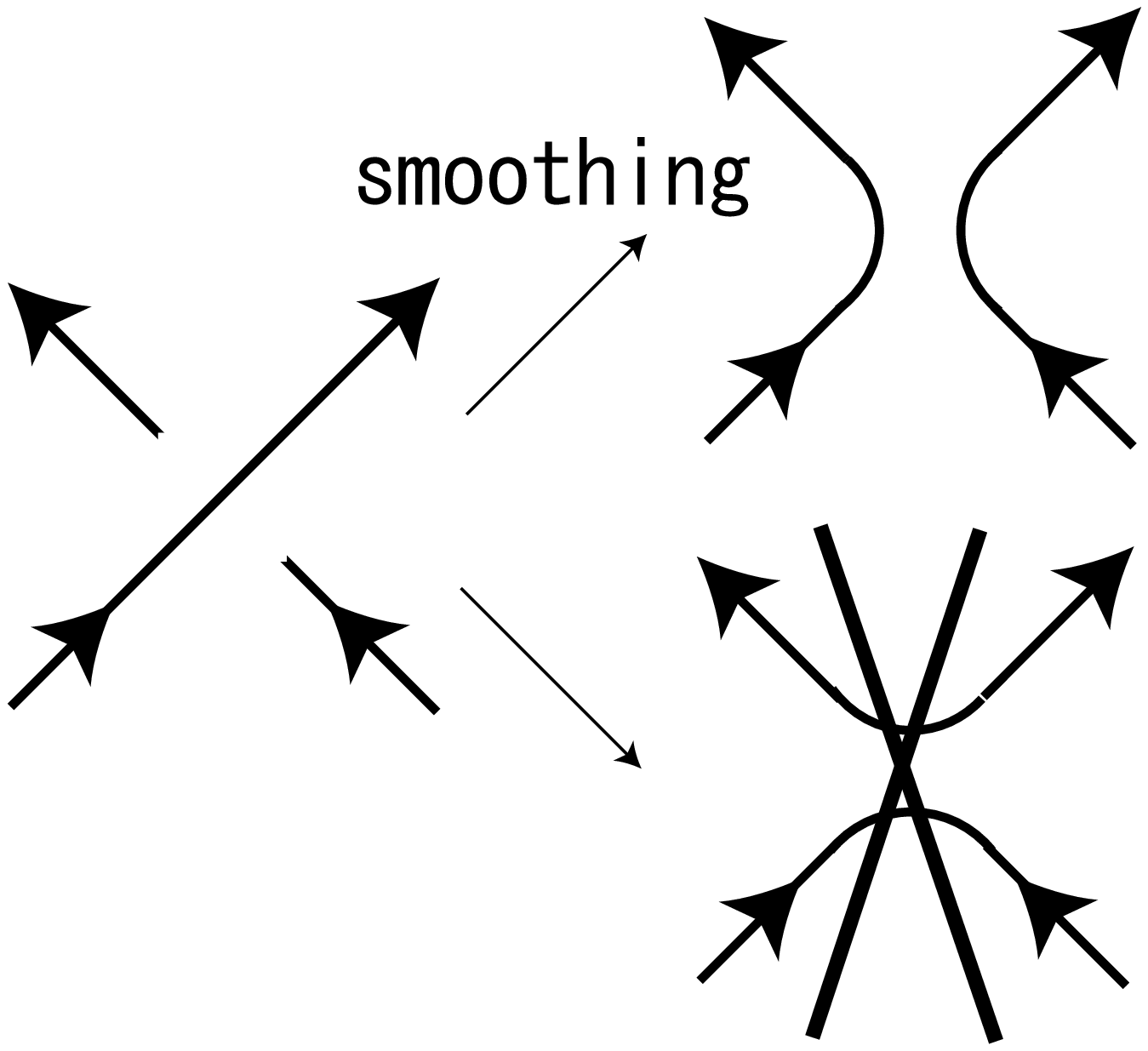}
\end{center}
\caption{}
\label{fig:smoothing}
\end{figure}

 Let $D$ be an oriented link diagram in the $2$-sphere,
$L_D$ the link represented by $D$, 
$p$ a crossing of $D$,
and $D_p$ the link (rather than a diagram) obtained from $D$
by performing the smoothing operation on $D$ at $p$ as below.
 We first cut the link at the two preimage points of $p$.
 Then we obtain the four endpionts.
 We paste the four short subarcs of the link near the endpoints
in the way other than the original one
so that their orientations are connected consistently.
 See Figure \ref{fig:smoothing}.

 We set $iu(D)$ to be 
the sum of 
the absolute value of the difference of the unknotting numbers 
$\Delta u(D_p) = u(D_p)-u(L_D)$ with the sign of $p$
over all the crossings of $D$, i.e., 
$$iu(D) = \sum_{p\in {\mathcal C}(D)} {\rm  sign}(p) \cdot |\Delta u(D_p)|$$
where ${\mathcal C}(D)$ is the set of all the crossings of $D$.
 For a diagram $D$ with no crossing, we set $iu(D)=0$.
 Note that, when $D$ represents a knot,
$iu(D)+w(D)$ with $w$ being the writhe 
is equal to Hass-Nowik invariant $g(I_{\phi}(D))$
with $\phi$ being $\Delta u$
and $g$ being the homomorphism 
with $g(X_k) = |k|+1$ and $g(Y_k) = -|k|-1$ (see \cite{HN1} and \cite{HN2}).

\begin{theorem}
{ 
$iu(D)$ does not change under an RI move and an unmatched RII move,
changes at most by one under a matched RII move,
and at most by two under an RIII move.
}
\end{theorem}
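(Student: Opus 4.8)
The plan is to check each move locally, using two reductions. First, every Reidemeister move preserves the link type $L_D$, so $u(L_D)$ is constant and only the quantities $u(D_p)$ can move. Second, if $q$ is a crossing that is not created, destroyed, or relocated by the move, then the move is supported away from $q$ and away from the reconnection made when smoothing $q$, so it descends to a Reidemeister move on the link $D_q$; hence $D_q$ keeps its link type and $\mathrm{sign}(q)\,|\Delta u(D_q)|$ is unchanged. Thus it suffices to control only the crossings genuinely involved in the move.

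For an RI move the single new crossing $p$ is a self-crossing whose oriented smoothing splits off a small disjoint trivial circle, so $D_p = L_D \sqcup O$ with $O$ an unknot; since a split unknot does not change the unknotting number, $\Delta u(D_p)=0$ and $p$ contributes $0$, leaving $iu(D)$ unchanged. For an RII move the two new crossings $p_1,p_2$ have opposite signs. In the unmatched (anti-parallel) case the oriented smoothing at either crossing caps the two strands into the same pair of turnbacks, so $D_{p_1}$ and $D_{p_2}$ are the same link (the surviving crossing being a removable kink); the two contributions cancel and $iu(D)$ is unchanged. In the matched (parallel) case the oriented smoothing at $p_i$ leaves the other crossing intact, so $D_{p_1}$ and $D_{p_2}$ are the same diagram except that the single surviving crossing appears with opposite sign, because $\mathrm{sign}(p_1)=-\mathrm{sign}(p_2)$; hence $D_{p_1}$ and $D_{p_2}$ differ by one crossing change, $|u(D_{p_1})-u(D_{p_2})|\le 1$, and by the reverse triangle inequality the net contribution $\mathrm{sign}(p_1)\bigl(|\Delta u(D_{p_1})|-|\Delta u(D_{p_2})|\bigr)$ has absolute value at most $1$.

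The RIII move is where I expect the real work. Here all three crossings persist, so I would put them in a local normal form with a well-defined top, middle, and bottom strand and compare the three smoothed links before and after. The geometric observation I would exploit is that smoothing any one of the three crossings merges the two triangle edges meeting it and turns the remaining two crossings into a bigon bounded by that merged arc and the third strand. For two of the three choices this bigon is \emph{coherent} (the third strand passes the same way, over or under, at both crossings), so it is removable by an RII move and the RIII move carries the smoothed link to an isotopic one, leaving that term of $iu$ unchanged; for the remaining choice, the crossing between the top and bottom strands, the bigon is a \emph{clasp}, and the RIII move alters the smoothed link by a single crossing change, so that term changes by at most $1$. Assembling these contributions, while tracking the signs of the three crossings (which RIII preserves) and the fact that a smoothing may change the number of components according as the crossing is a self- or mixed crossing, yields the bound of $2$. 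The main obstacle is precisely this last bookkeeping: verifying the coherent/clasp dichotomy for every orientation and over/under pattern, and confirming that in the clasp case the before- and after-smoothings really are related by one crossing change rather than by anything larger.
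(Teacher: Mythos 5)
Your RI and RII cases are correct and essentially identical to the paper's argument: the kink smoothing splits off a disjoint trivial component so its term vanishes, the two unmatched smoothings give the same link so their oppositely signed terms cancel, and in the matched case the two smoothed links differ by one crossing change, whence $\bigl||\Delta u(D_{p_1})|-|\Delta u(D_{p_2})|\bigr|\le|u(D_{p_1})-u(D_{p_2})|\le 1$. Your coherent/clasp trichotomy for RIII is also right (the third strand is over both merged edges, under both, or mixed, according as it is the top, bottom, or middle strand, so exactly the top--bottom smoothing gives a clasp). The genuine gap is at the step you yourself flag as the main obstacle: the claim that in the clasp case the before- and after-smoothings $D_z$ and $E_z$ differ by a \emph{single} crossing change is false. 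After smoothing $z$, the two resulting arcs form a peak and a valley meeting near the old crossing, and each contains a short segment running from the depth of the top strand to the depth of the bottom strand; the middle strand, which lies between those depths, is clasped around the peak before the move and around the valley after it. Sliding it from one position to the other sweeps a disk that each of the two corner segments pierces once, so \emph{two} crossing changes are needed: a single change merely turns the clasp with the peak into a coherent bigon, letting the middle strand slide off into the position clasped with \emph{neither} arc, which is not $E_z$ in general. This is exactly what the paper proves: $D_z$ and $E_z$ differ by two crossing changes together with Reidemeister moves, hence $|u(E_z)-u(D_z)|\le 2$, and the factor $2$ in the theorem comes from here, not from an assembly of several terms.

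Note also that your reasoning, taken at face value, proves the stronger statement that $iu$ changes by at most $1$ under an RIII move (only the $z$-term moves, by at most $1$), and that stronger statement is provably false. It would upgrade the lower bound of Corollary \ref{corollary:main} from $|iu(D_1)-iu(D_2)|/2$ to $|iu(D_1)-iu(D_2)|$ for matched RII and RIII moves; but by Lemma \ref{lemma:iuD2n}, for even $n$ one has unconditionally $iu(D_n)\ge n(n-2)/2+2(n-2)+n=n^2/2+2n-4$ (using $u(T(2,k)\ \sharp\ T(2,-k))=k$ for even $k$ via linking numbers, and Scharlemann's bound $u\ge 2$ for the odd-$k$ composite knots), while the explicit sequence of Section \ref{section:deformation} unknots $D_n$ with only $n+(n-1)n/2=(n^2+n)/2$ matched RII and RIII moves, and $n^2/2+2n-4>(n^2+n)/2$ for even $n\ge 4$. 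So the ``single crossing change'' step cannot be repaired, and your closing ``bound of $2$'' is correct only because the two-crossing-change estimate happens to hold, not because of the mechanism you describe.
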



\begin{proof}
 The proof is very similar to the arguments in Section 2 in \cite{HN1}.
 Let $D, E$ be link diagrams
such that $E$ is obtained from $D$ by a Reidemeister move.
 Let $L_D, L_E$ be the links represented by $D, E$ respectively.
 Note that $L_D$ and $L_E$ are the same link.

 First, 
we suppose that $E$ is obtained from $D$ by an RI move creating a crossing $a$.
 Then the link $E_a$ differs from $L_E$ by an isolated single trivial component,
and hence $u(E_a) = u(L_E)$.
 Then the contribution of $a$ to $iu$ is $\pm(u(E_a)-u(L_E))= 0$.
 The contribution of any other crossing $x$ to $iu$ is unchanged
since the RI move shows 
that $L_D$ and $L_E$ are the same link
and so $D_x$ and $E_x$ are.
 Thus an RI move does not change $iu$, i.e., $iu(D)=iu(E)$.

 When $E$ is obtained from $D$ by an RII move creating a bigon face,
let $x$ and $y$ be 
the positive and negative crossings at the corners of the bigon.
 If the RII move is unmatched,
then $E_x$ and $E_y$ are the same link.
 Hence $u(E_x)=u(E_y)$ and 
$|iu(E)-iu(D)| = ||u(E_x)-u(L_E)| - |u(E_y)-u(L_E)|| = 0$.
 If the RII move is matched,
then $E_x$ and $E_y$ differ by a crossing change,
and hence their unknotting numbers differ by at most one,
i.e., $|u(E_x)-u(E_y)| \le 1$. 
 Hence 
$|iu(E)-iu(D)| 
= ||u(E_x)-u(L_E)| - |u(E_y)-u(L_E)||
\le |(u(E_x)-u(L_E))-(u(E_y)-u(L_E))|
= |u(E_x)-u(E_y)|
\le 1$.

 We consider the case where $E$ is obtained from $D$ by an RIII move.
 For the crossing $x$ 
between the top and the middle strands of the trigonal face
where the RIII move is applied,
$D_x$ and $E_x$ are the same link.
 Hence the contribution of $x$ to $iu$ is unchanged.
 The same is true for the crossing $y$ between the bottom and the middle strands.
 Let $z$ be the crossing between the top and the bottom strands.
 Then $D_z$ and $E_z$ differ by two crossing changes and Redemeister moves,
and hence $|u(E_z)-u(D_z)| \le 2$. 
 Thus
$|iu(E)-iu(D)|
= ||u(E_z)-u(L_E)| - |u(D_z)-u(L_D)||
\le |(u(E_z)-u(L_E))-(u(D_z)-u(L_D))|
= |u(E_z)-u(D_z)|
\le 2$.
\end{proof}


\section{Unknotting sequence of Reidemeister moves on $D_n$}\label{section:deformation}


 In this section,
we deform the link diagram $D_n$ to a diagram with no crossing
by a sequence of Reidemeister moves.

\begin{lemma}\label{lemma:deformation1}
 The closure of the braid 
$b=
\sigma_1^{-k}
(\sigma_2^{-1} \sigma_1^{-1}) 
(\sigma_3^{-1} \sigma_2^{-1})
\cdots 
(\sigma_m^{-1} \sigma_{m-1}^{-1}) 
\sigma_m^{\ell}$
can be deformed into 
that of
$b'=
\sigma_1^{-k-1}
(\sigma_2^{-1} \sigma_1^{-1}) 
(\sigma_3^{-1} \sigma_2^{-1})
\cdots 
(\sigma_{m-1}^{-1} \sigma_{m-2}^{-1}) 
\sigma_{m-1}^{\ell}$
by a sequence of $k$ RIII moves and a single RI move.
\end{lemma}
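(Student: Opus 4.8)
The plan is to realize the asserted deformation as an explicit sequence of Reidemeister moves on the closed-braid diagram in the $2$-sphere, reading the statement as a claim about diagrams rather than about braid words. The engine of the $k$ RIII moves is the ``comb'' identity $\sigma_1^{-k}\sigma_2^{-1}\sigma_1^{-1}=\sigma_2^{-1}\sigma_1^{-1}\sigma_2^{-k}$, which I would prove by applying the braid relation $\sigma_1^{-1}\sigma_2^{-1}\sigma_1^{-1}=\sigma_2^{-1}\sigma_1^{-1}\sigma_2^{-1}$ once for each of the $k$ factors in the block $\sigma_1^{-k}$, peeling off a single $\sigma_1^{-1}$ at every step. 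Each such application is one RIII move, so this identity is precisely where the $k$ RIII moves of the statement come from; the point I would stress is that the leading block $\sigma_1^{-k}$ is the only twist region across which the deformation is genuinely dragged, so that the count comes out as $k$ and not, say, as $\ell$.

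The single RI move I would identify with a Markov destabilization that deletes the last strand: the crossing removed is the $\sigma_m^{-1}$ produced by the final factor $(\sigma_m^{-1}\sigma_{m-1}^{-1})$. Once strand $m+1$ meets the remainder of the diagram in this one crossing, that crossing together with the closure arc forms a monogon, and deleting it is an RI move taking the $(m+1)$-braid closure to an $m$-braid closure. This is consistent with the crossing count: $b$ has $k+2(m-1)+\ell$ crossings and $b'$ has $(k+1)+2(m-2)+\ell$, one fewer, exactly matching $k$ crossing-preserving RIII moves followed by one crossing-deleting RI move and, crucially, forbidding any RII move. I would use this count as a running check that the deformation stays on track; the removed crossing is negative, which also matches the increase by one of the exponent sum from $b$ to $b'$.

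The step I expect to be the real obstacle is explaining why the trailing twist $\sigma_m^{\ell}$, a block of $\ell$ crossings, re-indexes to $\sigma_{m-1}^{\ell}$ without costing $\ell$ RIII moves. The idea I would pursue is that this transfer is carried out by an ambient isotopy of the diagram on the $2$-sphere, not by Reidemeister moves: using the staircase $P=(\sigma_2^{-1}\sigma_1^{-1})\cdots(\sigma_m^{-1}\sigma_{m-1}^{-1})$ to route strand $m+1$ and its closure arc through the outer disk of $S^2$, the whole $\ell$-fold twist should slide rigidly from the pair $(m,m+1)$ onto the pair $(m-1,m)$ across a crossingless region, contributing no Reidemeister moves, while the only honest crossings the moving arc is forced to traverse are those of the leading block $\sigma_1^{-k}$, handled by the comb identity above. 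Making this precise is exactly the delicate part: I would track, in a figure, which arcs bound clear disks in the sphere at each stage, verify that no crossing is ever created or cancelled during the isotopy (so that no RII move is hidden in it), and organize the whole argument as an induction that peels the last factor $(\sigma_m^{-1}\sigma_{m-1}^{-1})$ together with $\sigma_m^{\ell}$ and lowers $m$ by one, with the comb identity supplying the RIII moves and the destabilization supplying the RI move at each stage.
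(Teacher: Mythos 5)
Your first half is exactly right and matches the paper: the comb identity $\sigma_1^{-k}\sigma_2^{-1}\sigma_1^{-1}=\sigma_2^{-1}\sigma_1^{-1}\sigma_2^{-k}$, realized by $k$ applications of the braid relation, accounts for the $k$ RIII moves and turns the closure of $b$ into that of $\sigma_2^{-1}\sigma_1^{-1}\sigma_2^{-k}(\sigma_3^{-1}\sigma_2^{-1})\cdots(\sigma_m^{-1}\sigma_{m-1}^{-1})\sigma_m^{\ell}$. But your second half destabilizes the wrong strand, and this is a genuine gap. You propose to delete the crossing $\sigma_m^{-1}$ by removing the last strand; that crossing is not removable by an RI move, because strand $m+1$ does not meet the rest of the diagram in one crossing: it still runs through all $\ell$ crossings of the trailing twist $\sigma_m^{\ell}$, so $\sigma_m^{-1}$ together with the closure arc bounds no monogon face (unless $\ell=0$). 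The patch you offer --- an ambient isotopy of the diagram in $S^2$ sliding the $\ell$-fold twist rigidly from the strand pair $(m,m+1)$ to $(m-1,m)$ --- cannot exist: an isotopy of $S^2$ preserves the diagram as an embedded $4$-valent graph, in particular which arcs participate in each crossing, and in the closure of $b$ the arcs crossing in $\sigma_m^{\ell}$ do involve the strand through position $m+1$. Transferring a twist to a different pair of strands is exactly the kind of change that costs Reidemeister moves, so no crossing count can certify that your isotopy hides none.

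The correct move --- and the paper's --- is at the opposite end of the braid. After the $k$ RIII moves the word contains exactly one letter $\sigma_1^{-1}$, so the first strand of the closure meets the rest of the diagram in a single crossing; in the $2$-sphere the outermost region is then a monogon face, and one RI move (Markov destabilization at the \emph{first} strand) deletes this $\sigma_1^{-1}$. Deleting the first strand renumbers every position down by one, i.e.\ replaces $\sigma_i$ by $\sigma_{i-1}$ for all $i\ge 2$ throughout the word, yielding $\sigma_1^{-1}\sigma_1^{-k}(\sigma_2^{-1}\sigma_1^{-1})\cdots(\sigma_{m-1}^{-1}\sigma_{m-2}^{-1})\sigma_{m-1}^{\ell}=b'$. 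In particular the re-indexing of $\sigma_m^{\ell}$ to $\sigma_{m-1}^{\ell}$, the step you flagged as the real obstacle, is purely notational: the twist region never moves, and no isotopy argument is needed.
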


\begin{figure}[htbp]
\begin{center}
\includegraphics[width=8cm]{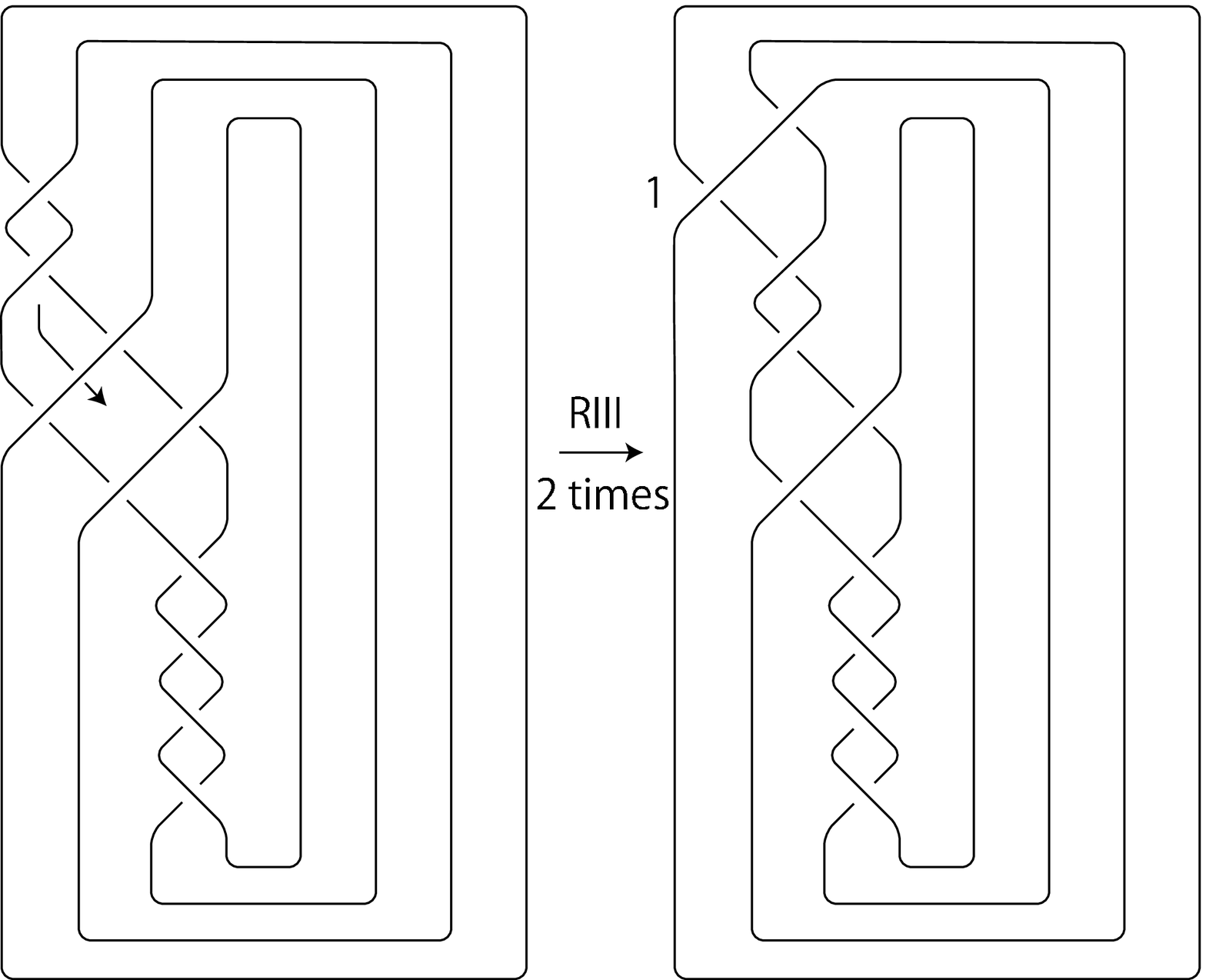}
\end{center}
\caption{}
\label{fig:deformation}
\end{figure}

\begin{proof}
 Applying the braid relation 
$\sigma_1^{-1} \sigma_2^{-1} \sigma_1^{-1}
=\sigma_2^{-1} \sigma_1^{-1} \sigma_2^{-1}$
repeatedly $k$ times,
we obtain the closed braid of 
$\sigma_2^{-1} \sigma_1^{-1} \sigma_2^{-k}
(\sigma_3^{-1} \sigma_2^{-1})
\cdots 
(\sigma_m^{-1} \sigma_{m-1}^{-1}) 
\sigma_m^{\ell}$.
 This is accomplished by a sequence of $k$ RIII moves.
 See Figure \ref{fig:deformation}.
 Thus we can apply RI move (Markov's destabilization)
on the outermost region which is a monogon face,
to obtain $b'$.
 Note that this remove the only $\sigma_1^{-1}$,
and reduces the suffix numbers of $\sigma_i$ ($i \ge 2)$ by one.
\end{proof}

%

 We can apply the deformation in Lemma \ref{lemma:deformation1}
repeatedly $n-1$ times
to deform $D_n$ into the closure of the $2$-braid 
$\sigma_1^{-n} \sigma_1^n$.
 Then a sequence of $n$ matching RII moves
deletes all the crossings.
 If $n$ is a natural number,
then this deformation consists of $n-1$ RI moves deleting a positive crossing,
$n$ matched RII moves deleting a bigon
and $1+2+\cdots + (n-1) = (n-1)n/2$ RIII moves.

 Thus the former half of Theorem \ref{theorem:sequence} holds.


\section{Calculation of $iu(D_n)$ and proof of Theorem \ref{theorem:sequence}}
\label{section:calculation}

\begin{lemma}\label{lemma:iuD2n}
$iu(D_n)=
2\displaystyle\sum_{k=1}^{n-1} u(T(2,k)\ \sharp\ T(2,-k)) 
+ u(T(2,n)\ \sharp\ T(2,-n))$,
\newline
and $iu_{+1,+1}(D_n)=
3n-2+2\displaystyle\sum_{k=1}^{n-1} u(T(2,k)\ \sharp\ T(2,-k)) 
+ u(T(2,n)\ \sharp\ T(2,-n))$.
\end{lemma}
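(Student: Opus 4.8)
The plan is to reduce everything to identifying, for each crossing $p$ of $D_n$, the link type of the smoothed link $D_p$, and then to sum $\mathrm{sign}(p)\,u(D_p)$. Two preliminary observations make this feasible. First, $L_{D_n}$ is the trivial two-component link, so $u(L_{D_n})=0$ and $\Delta u(D_p)=u(D_p)\ge 0$; hence $iu(D_n)=\sum_{p\in{\mathcal C}(D_n)}\mathrm{sign}(p)\,u(D_p)$. Second, since $D_n$ is the closure of a descending braid, the two strands at every crossing are parallel, so the orientation-preserving smoothing simply erases that crossing; therefore $D_p$ is the closure of the braid obtained from the defining word of $D_n$ by deleting the single letter corresponding to $p$. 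Finally, $D_n$ has $c(D_n)=3n-1$ crossings and writhe $w(D_n)=(2n-1)-n=n-1$, so $\frac12 c(D_n)+\frac32 w(D_n)=3n-2$; since $iu_{+1,+1}(D_n)=iu(D_n)+\frac12 c(D_n)+\frac32 w(D_n)$, the second formula follows at once from the first, and it remains only to compute $iu(D_n)$.

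Next I would dispose of the $n$ negative crossings, which are the letters of $\sigma_n^{\,n}$. Deleting one of them leaves the closure of $N\,\sigma_n^{\,n-1}$, where $N=\sigma_1^{-1}(\sigma_2^{-1}\sigma_1^{-1})\cdots(\sigma_n^{-1}\sigma_{n-1}^{-1})$ is the staircase, which is left untouched. Applying the reduction of Lemma \ref{lemma:deformation1} $n-1$ times (exactly as for $D_n$ itself, the only change being that the terminal exponent is $n-1$ instead of $n$) brings this to the closure of $\sigma_1^{-n}\sigma_1^{\,n-1}=\sigma_1^{-1}$ on two strands, i.e.\ the unknot. Hence every negative crossing contributes $0$ to $iu(D_n)$, and only the positive crossings matter.

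The heart of the proof is the $2n-1$ positive crossings: the leading $\sigma_1^{-1}$ and, for each $i\in\{2,\dots,n\}$, the two letters $\sigma_i^{-1}$ and $\sigma_{i-1}^{-1}$ of the block $(\sigma_i^{-1}\sigma_{i-1}^{-1})$. Writing $u_k=u(T(2,k)\,\sharp\,T(2,-k))$, I claim that deleting the first letter $\sigma_i^{-1}$ of block $i$ produces $T(2,i)\,\sharp\,T(2,-i)$, that deleting the second letter $\sigma_{i-1}^{-1}$ produces $T(2,i-1)\,\sharp\,T(2,-(i-1))$, and that deleting the leading $\sigma_1^{-1}$ produces $T(2,1)\,\sharp\,T(2,-1)$, the unknot. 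Each identification is carried out by applying the staircase reduction of Lemma \ref{lemma:deformation1} to the two intact portions of the word on either side of the deleted letter; the deletion interrupts the telescoping at one place, and after repeated use of the braid relation $\sigma_j^{-1}\sigma_{j-1}^{-1}\sigma_j^{-1}=\sigma_{j-1}^{-1}\sigma_j^{-1}\sigma_{j-1}^{-1}$, together with conjugation and Markov destabilization, the closure is recognized as the connected sum of a $(2,-k)$-torus link (coming from the negative staircase) and a $(2,k)$-torus link (coming from $\sigma_n^{\,n}$), that is, $T(2,k)\,\sharp\,T(2,-k)$. The multiset of exponents $k$ that arises is $\{1,1,2,2,\dots,n-1,n-1,n\}$: the value $k=1$ occurs both as the leading letter and as the second letter of block $2$; for $2\le k\le n-1$ the value $k$ occurs as the first letter of block $k$ and as the second letter of block $k+1$; and the value $n$ occurs only as the first letter of block $n$. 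Since each positive crossing contributes $+u_k$, summing gives $iu(D_n)=2\sum_{k=1}^{n-1}u_k+u_n$, and adding $3n-2$ yields the formula for $iu_{+1,+1}(D_n)$.

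The main obstacle is precisely the braid bookkeeping in the previous paragraph: one must verify that each deleted word really reduces to $T(2,k)\,\sharp\,T(2,-k)$ with exactly the asserted $k$, and in particular that everything beyond the claimed $(2,k)$ and $(2,-k)$ summands cancels, so that no additional knotting survives. Because the interrupted word is no longer in the clean staircase form, this demands careful, repeated application of the braid relation interleaved with conjugation and destabilization; organizing it cleanly---most naturally by an induction on $n$ that parallels Lemma \ref{lemma:deformation1}, or with the aid of the figures---is the real content of the lemma. Everything else, namely the vanishing of the negative-crossing contributions, the final tally of signs and multiplicities, and the passage from $iu(D_n)$ to $iu_{+1,+1}(D_n)$, is routine.
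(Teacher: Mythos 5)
Your proposal is correct and takes essentially the same route as the paper: the paper's proof likewise identifies the oriented smoothing at each crossing with deletion of the corresponding braid letter, asserts (via its figures) that smoothing at a crossing of $\sigma_k^{-1}$ yields $T(2,k)\ \sharp\ T(2,-k)$ while smoothing at a crossing of $\sigma_n$ yields the trivial knot, notes $u(L_{D_n})=0$, counts multiplicities (each $k\le n-1$ twice, $k=n$ once), and computes $\frac{1}{2}c(D_n)+\frac{3}{2}w(D_n)=3n-2$ exactly as you do. The braid bookkeeping you flag as the remaining content is precisely what the paper delegates to its figure, and your sketch of it is sound --- indeed the deleted word splits as a product of a staircase on strands $1,\dots,i$ and a staircase-plus-twist on strands $i,\dots,n+1$ meeting in a single strand, so its closure is a connected sum of two closed braids, each of which reduces to a $(2,\pm k)$-torus link by the reduction of Lemma \ref{lemma:deformation1}.
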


\begin{figure}[htbp]
\begin{center}
\includegraphics[width=9cm]{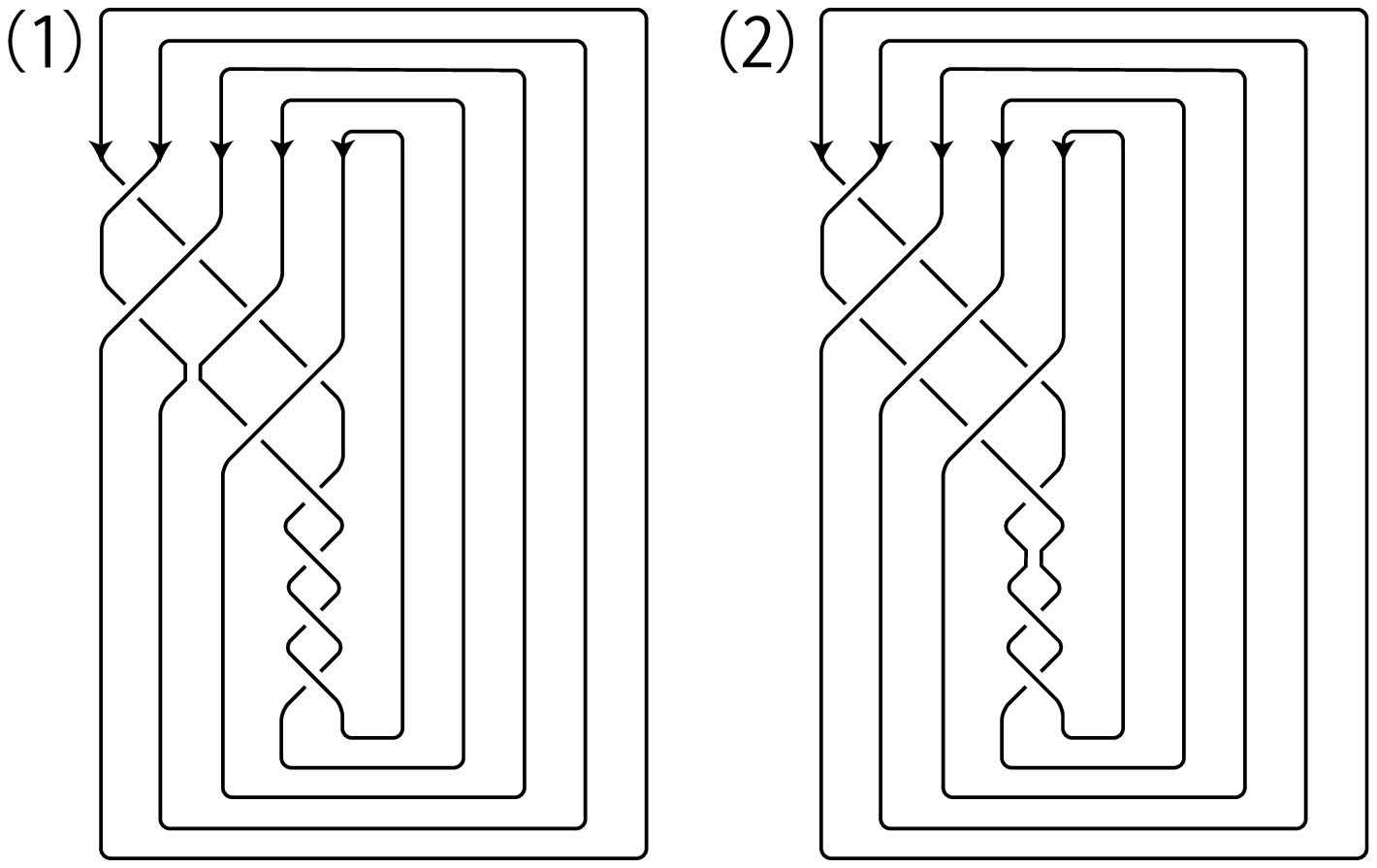}
\end{center}
\caption{}
\label{fig:Dn+}
\end{figure}

\begin{proof}
 If we perform a smoothing at a crossing of $D_n$ 
corresponding to $\sigma_k^{-1}$ for $1 \le k \le n$,
then we obtain the link $T(2,k)\ \sharp\ T(2,-k)$.
 See (1) in Figure \ref{fig:Dn+}.
 A smoothing operation at a crossing of $D_n$
corresponding to $\sigma_n$
yields the trivial knot.
 See (2) in Figure \ref{fig:Dn+}.
 Since $D_n$ represents the trivial $2$-component link,
$\Delta u (D_x) = u (D_x)$ for any crossing $x$ of $D_n$.
 Hence we obtain the above formula of $iu(D_n)$.
 Note that there are two crossing points of $D_n$
corresponding to $\sigma_k^{-1}$ with $1 \le k \le n-1$,
while there is only one crossing point of $D_n$
corresponding to $\sigma_n^{-1}$.

 Since $D_n$ has $2n-1$ positive crossings and $n$ negative crossings,
$c(D_n)/2 + 3w(D_n)/2 = ((2n-1)+n)/2 + 3((2n-1)-n)/2 = 3n-2$.
  Thus we obtain the above formula of $iu_{+1,+1} (D_n)$. 
\end{proof}

\begin{proof}[Proof of Theorem \ref{theorem:sequence}]
 The former half of Theorem \ref{theorem:sequence} is already shown
in Section \ref{section:deformation}.
 The above formula of $iu_{+1,+1} (D_n)$ and Corollary \ref{corollary:main2}
together show the latter half of Theorem \ref{theorem:sequence}.
\end{proof}


\bibliographystyle{amsplain}

\medskip

\noindent
Chuichiro Hayashi: 
Department of Mathematical and Physical Sciences,
Faculty of Science, Japan Women's University,
2-8-1 Mejirodai, Bunkyo-ku, Tokyo, 112-8681, Japan.
hayashic@fc.jwu.ac.jp

\vspace{3mm}
\noindent
Miwa Hayashi:
Department of Mathematical and Physical Sciences,
Faculty of Science, Japan Women's University,
2-8-1 Mejirodai, Bunkyo-ku, Tokyo, 112-8681, Japan.
\newline
miwakura@fc.jwu.ac.jp

\end{document}